\newcommand{\Pin}{\mathop{\mathrm{Pin}}}
\newcommand{\Spin}{\mathop{\mathrm{Spin}}}
\newcommand{\Cl}{\mathop{\mathrm{Cl}}}
\newcommand{\Aut}{\mathop{\mathrm{Aut}}}
\begin{document}

\title*{A 3D spinorial view of 4D exceptional phenomena}
\author{Pierre-Philippe Dechant}
\institute{Pierre-Philippe Dechant \at Department of Mathematics \\ University of York \\  Heslington, York, YO10 5GG \\ United Kingdom, \email{ppd22@cantab.net}
}
%
%
\maketitle


\abstract{
We discuss a Clifford algebra framework for discrete symmetry groups (such as
reflection, Coxeter, conformal and modular groups), leading to a surprising
number of new results. Clifford algebras allow for a particularly simple
description of reflections via `sandwiching'. This extends to a description of
orthogonal transformations in general by means of `sandwiching' with Clifford
algebra multivectors, since all orthogonal transformations can be written as products
of reflections by the Cartan-Dieudonn\'e theorem.
We begin by viewing the largest non-crystallographic reflection/Coxeter group $H_4$ as a group of rotations in two different ways -- firstly via a folding from the largest exceptional group $E_8$, and secondly by induction from the icosahedral group $H_3$ via Clifford spinors. 
We then generalise the second way by
presenting a construction of a 4D root system from any given 3D one.
This affords a new -- spinorial -- perspective on 4D phenomena, in particular as the induced root systems are precisely the exceptional ones in 4D, and their unusual automorphism groups are easily explained in the spinorial picture; we discuss the wider context of Platonic solids, Arnold's trinities and the McKay correspondence.
The multivector groups can be used to perform concrete group-theoretic calculations, e.g. those for $H_3$ and $E_8$, and we discuss how various representations can also be constructed in this Clifford framework; in particular,  representations of quaternionic type arise very naturally.}

\section{Introduction}\label{sec_intro}

Reflections are the building blocks for a large class of discrete symmetries that are of interest in both pure and applied mathematics. Coxeter groups, root systems and polytopes are intimately related to Lie groups and algebras, as well as to the geometry of various dimensions. The geometry of three dimensions has manifold obvious implications for physics, chemistry and biology; in particular, in our recent work we were interested in the role of icosahedral symmetry to virus structure, fullerenes and quasicrystals \cite{DechantTwarockBoehm2011H3aff, Dechant:eo5029, DechantTwarockBoehm2011E8A4}. Lie groups and algebras, as well as their root lattices and Coxeter/Weyl groups are also ubiquitous in high energy physics
\cite{Gross1985HE, DamourHenneauxNicolai2002E10, Koca2001H4E8,HennauxPersson2008SpacelikeSingularitiesAndHiddenSymmetriesofGravity}.  
As we shall see later, even conformal and modular groups fall into this category of discrete groups; the relevant areas in mathematical physics include conformal field theory \cite{Schellekens1996CFT} and Moonshine phenomena \cite{gannon2006moonshine, eguchi2011notes,eguchi2007liouville,taormina2013twist}. 

In this reflection framework one always has an inner product on the $n$-dimensional vector space in question; one can thus in fact always construct the corresponding $2^n$-dimensional Clifford algebra, which contains the original $n$-dimensional vector space as a subspace. This is particularly useful, as Clifford algebra allows a dramatic simplification when it comes to handling reflections.
We have explored such a Clifford algebra framework in
\cite{Dechant2012CoxGA,  Dechant2013Platonic, Dechant2012Induction, Dechant2015ICCA} from the pure mathematics perspective; this has led to a number of conceptual and computational simplifications, as well as some very profound results on the nature of four-dimensional (4D) geometry and its interplay with the geometry of three dimensions (3D), in particular that of rotations. Here we present an account of this work tailored to finite group theorists, exploring various pure connections, as well as presenting new work on group and representation theory. 

 This paper is organised as follows. 
Section \ref{sec_Cox} introduces root systems, reflection and Coxeter groups, and their graphical representations. 
In Section \ref{sec_versor} we present some Clifford algebra background, in particular the unique reflection prescription and the resulting versor formalism via the Cartan-Dieudonn\'e theorem.
We discuss two ways of viewing the reflection group $H_4$ as a group of rotations -- 
in Section \ref{sec_spin} we discuss $H_4$ as a subgroup of $E_8$;
in Section \ref{sec_bin} we consider the group of $120$ multivectors generated by the simple root vectors of $H_3$ via multiplication in the Clifford algebra. This is the binary icosahedral group $2I$, and its multivector components are exactly the roots of $H_4$. 
We then generalise this observation: 
this yields a remarkable theorem which induces  a 4D root system from every 3D root system in a constructive way (Section \ref{sec_pin}). We discuss this construction -- which uses Clifford spinors -- and its relation to the 4D Platonic solids  along with their peculiar symmetries, as well as the wider context of Arnold's trinities and the McKay correspondence,  which this construction puts into a wider framework. 
In Section \ref{sec_CGA} we revisit some of the earlier group-theoretic calculations and show how Clifford multivectors can be used to construct more conventional matrix representations; in particular, certain  representations of quaternionic type of a number of specific groups arise uniformly in this construction. 
We conclude with a summary and possible further work in Section \ref{sec_concl}.

\section{Root systems and reflection groups}\label{sec_Cox}

In this section, we introduce reflection/Coxeter groups as generated by their root systems.
Let $V$ be an $n$-dimensional Euclidean vector space endowed with a positive
definite bilinear form $(\cdot |\cdot )$.
A \emph{root system} is a collection $\Phi$ of non-zero vectors (called root vectors)
satisfying the following
two axioms:
\begin{enumerate}
\item $\Phi$ only contains a root $\alpha$ and its negative, but no other scalar multiples: $\Phi \cap \mathbb{R}\alpha=\{-\alpha, \alpha\}\,\,\,\,\text{ for every }\,\, \alpha \in \Phi$. 
\item $\Phi$ is invariant under all reflections corresponding to root vectors. That is, if
$s_\alpha$ is the reflection of $V$ in the hyperplane with normal $\alpha$, we require that 
 $s_\alpha\Phi=\Phi \,\,\,\text{ for every }\,\, \alpha\in\Phi$. 
\end{enumerate}
For a crystallographic root system, a subset $\Delta$ of $\Phi$, called \emph{simple roots} $\alpha_1, \dots, \alpha_n$, is sufficient to express every element of $\Phi$ via $\mathbb{Z}$-linear combinations with coefficients of the same sign. 
$\Phi$ is therefore  completely characterised by this basis of simple roots. In the case of the non-crystallographic root systems $H_2$, $H_3$ and $H_4$, the same holds for the extended integer ring $\mathbb{Z}[\tau]=\lbrace a+\tau b| a,b \in \mathbb{Z}\rbrace$, where $\tau$ is   the golden ratio $\tau=\frac{1}{2}(1+\sqrt{5})=2\cos{\frac{\pi}{5}}$, and $\sigma$ is its Galois conjugate $\sigma=\frac{1}{2}(1-\sqrt{5})$ (the two solutions to the quadratic equation $x^2=x+1$).  
 For the crystallographic root systems, the classification in terms of Dynkin diagrams essentially follows the one familiar from Lie groups and Lie algebras, as their Weyl groups are  the crystallographic Coxeter groups. A mild generalisation to so-called Coxeter-Dynkin diagrams is necessary for the non-crystallographic root systems, 
 where nodes correspond to simple roots, orthogonal roots are not connected, roots at $\frac{\pi}{3}$ have a simple link, and other angles $\frac{\pi}{m}$ have a link with a label $m$. 
The  \emph{Cartan matrix} of a set of simple roots $\alpha_i\in\Delta$ is defined as the matrix
	$A_{ij}=2{(\alpha_i\vert \alpha_j)}/{(\alpha_j\vert \alpha_j)}$.
For instance, the root system of the icosahedral group $H_3$ has one link labelled by $5$ (via the above relation $\tau=2\cos{\frac{\pi}{5}}$), as does its four-dimensional analogue $H_4$. A plethora of examples of diagrams is presented later, in Table \ref{tab:4} in Section \ref{sec_bin}.

The reflections in the second axiom of the root system generate a reflection group. A Coxeter group is a mathematical abstraction of the concept of a reflection group via  involutory  generators (i.e. their square is the identity, which captures the idea of a reflection), subject to mixed relations that represent $m$-fold rotations (since two successive reflections generate a rotation in the plane spanned by the two roots).
A \emph{Coxeter group} is a group generated by a set of involutory generators $s_i, s_j \in S$ subject to relations of the form $(s_is_j)^{m_{ij}}=1$ with $m_{ij}=m_{ji}\ge 2$ for $i\ne j$.  
The  finite Coxeter groups have a geometric representation where the involutions are realised as reflections at hyperplanes through the origin in a Euclidean vector space $V$, i.e. they are essentially  just the classical reflection groups. In particular, then the abstract generator $s_i$ corresponds to the simple {reflection}
$s_i: \lambda\rightarrow s_i(\lambda)=\lambda - 2\frac{(\lambda|\alpha_i)}{(\alpha_i|\alpha_i)}\alpha_i$
 in the hyperplane perpendicular to the  simple {root } $\alpha_i$.
The action of the Coxeter group is  to permute these root vectors, and its  structure is thus encoded in the collection  $\Phi\in V$ of all such roots, which in turn form a root system.

We now move onto the Clifford algebra framework, which affords a uniquely simple prescription for performing reflections (and thus any orthogonal transformation) in spaces of any dimension and signature. For any root system, the quadratic form mentioned in the definition always allows one to enlarge the $n$-dimensional vector space $V$ to the corresponding $2^n$-dimensional Clifford algebra. The Clifford algebra  is therefore a very natural object to consider in this context, as its unified structure simplifies many problems both conceptually and computationally (as we shall see in the next section), rather than applying the linear structure of the space and the inner product separately.

\section{Clifford versor framework}\label{sec_versor}

Clifford algebra can be viewed as a deformation of the (perhaps more familiar) exterior algebra by a quadratic form -- though we do not necessarily advocate this point of view; they are isomorphic as vector spaces, but not as algebras, and Clifford algebra is in fact much richer due to the invertibility of the algebra product, as we shall see. 
The \emph{geometric product} of Geometric/Clifford Algebra is defined by $xy=x\cdot y+x \wedge y$ -- where the scalar product (given by the symmetric bilinear form) is the symmetric part $x\cdot y=(x|y)=\frac{1}{2}(xy+yx)$ and the exterior product the antisymmetric part $x\wedge y=\frac{1}{2}(xy-yx)$   \cite{Hestenes1966STA, HestenesSobczyk1984, Hestenes1990NewFound,LasenbyDoran2003GeometricAlgebra}.
It provides a very compact and efficient way of handling reflections in any number of dimensions, and thus by the \emph{Cartan-Dieudonn\'e theorem} in fact of any orthogonal transformation. For a unit vector $\alpha$, the two terms in the above formula for a reflection of a vector $v$ in the hyperplane orthogonal to $\alpha$ simplify to the double-sided (`sandwiching') action of $\alpha$ via the geometric product
	\begin{equation}\label{in2refl}
	  v\rightarrow s_\alpha v=v'=v-2(v|\alpha)\alpha=v-2\frac{1}{2}(v\alpha+\alpha v)\alpha=v-v\alpha^2-\alpha v\alpha=-\alpha v \alpha.
	\end{equation}
This  prescription for reflecting vectors in hyperplanes is remarkably compact (note that $\alpha$ and $-\alpha$ encode the same reflection and thus provide a double cover). Via the Cartan-Dieudonn\'e theorem, any orthogonal transformation can be written as the product of reflections,  and thus by performing consecutive reflections each given via `sandwiching', one is led to define a versor as
 a Clifford multivector $A=a_1a_2\dots a_k$, that is the product of $k$ unit vectors $a_i$  \cite{Hestenes1990NewFound}.  Versors form a multiplicative group called the versor/pinor group $\Pin$ under the single-sided multiplication with the geometric product, with inverses given by $\tilde{A}A=A\tilde{A}=\pm 1$, where the tilde denotes the reversal of the order of the constituent vectors $\tilde{A}=a_k\dots a_2a_1$, and  the $\pm$-sign defines its parity.
Every orthogonal transformation $\underbar{A}$ of a vector $v$ can thus be expressed by means of unit versors/pinors via
\begin{equation}\label{in2versor}
\underbar{A}: v\rightarrow  v'=\underbar{A}(v)=\pm{\tilde{A}vA}.
\end{equation}
Unit versors are double-covers of the respective orthogonal transformation, as $A$ and $-A$ encode the same transformation. Even versors $R$, that is, products of an even number of vectors, are called spinors or rotors. They form a subgroup of the $\Pin$ group and constitute a double cover of the special orthogonal group, called the $\Spin$ group.
Clifford algebra therefore provides a particularly natural and simple construction of the $\Spin$ groups. Thus the remarkably simple construction of the binary polyhedral groups in Section \ref{sec_pin} is not at all surprising from a Clifford point of view, but appears to be unknown in the Coxeter community, and ultimately leads to the novel result of the spinor induction theorem of (exceptional) 4D root systems in Section \ref{sec_pin}.

\begin{table}
\caption{Versor framework for a unified treatment of the chiral, full,  binary and pinor polyhedral groups}
\label{tab:1}       
%
%
\begin{tabular}{p{2.6cm}p{3.3cm}p{4.9cm}}
\hline\noalign{\smallskip}
Continuous group &Discrete subgroup & Multivector action  \\
\noalign{\smallskip}\svhline\noalign{\smallskip}
$SO(n)$&rotational/chiral & $x\rightarrow \tilde{R}xR$\\
$O(n)$&reflection/full & $x\rightarrow \pm\tilde{A}xA$\\
$\Spin(n)$&binary  & spinors $R$ under $(R_1,R_2)\rightarrow R_1R_2$\\
$\Pin(n)$& pinor & pinors $A$ under $(A_1,A_2)\rightarrow A_1A_2$\\
\noalign{\smallskip}\hline\noalign{\smallskip}
\end{tabular}
\end{table}

The versor realisation of the orthogonal group is much simpler than conventional matrix approaches. 
Table \ref{tab:1} summarises the various action mechanisms of multivectors: a rotation (e.g. the continuous group $SO(3)$ or the discrete subgroup, the chiral icosahedral group $I=A_5$) is given by double-sided action of a spinor $R$, whilst these spinors themselves form a group under single-sided action/multiplication (e.g. the continuous group $\Spin(3)\sim SU(2)$ or the discrete subgroup, the binary icosahedral group $2I$).
Likewise, a reflection (continuous $O(3)$ or the discrete subgroup, the full icosahedral group the Coxeter group $H_3$) corresponds to sandwiching with the versor $A$, whilst the versors single-sidedly form a multiplicative group (the $\Pin(3)$ group or the discrete analogue, the double cover of $H_3$, which we denote $\Pin(H_3)$). In the conformal geometric algebra setup one uses the fact that the conformal group   $C(p,q)$ is homomorphic to $SO(p+1,q+1)$ to treat translations as well as rotations in a unified versor framework  \cite{HestenesSobczyk1984, LasenbyDoran2003GeometricAlgebra,Dechant2011Thesis, Dechant2012AGACSE, Dechant2015ICCA}. \cite{Dechant2012AGACSE, Dechant2015ICCA} also discuss reflections, inversions, translations and modular transformations in this way. 

\begin{example}
The Clifford/Geometric algebra of three dimensions $\Cl(3)$ is spanned by three orthogonal -- and thus anticommuting -- unit vectors $e_1$, $e_2$ and $e_3$. It also contains the three bivectors $e_1e_2$, $e_2e_3$ and $e_3e_1$ that all square to $-1$, as well as the  highest grade object $e_1e_2e_3$   (trivector and pseudoscalar), which also squares to $-1$. Therefore, in Clifford algebra various geometric objects arise that provide imaginary structures; however, there can be different ones and they can have non-trivial commutation relations with the rest of the algebra. 
\begin{equation}\label{in2PA}
  \underbrace{\{1\}}_{\text{1 scalar}} \,\,\ \,\,\,\underbrace{\{e_1, e_2, e_3\}}_{\text{3 vectors}} \,\,\, \,\,\, \underbrace{\{e_1e_2=Ie_3, e_2e_3=Ie_1, e_3e_1=Ie_2\}}_{\text{3 bivectors}} \,\,\, \,\,\, \underbrace{\{I\equiv e_1e_2e_3\}}_{\text{1 trivector}}.
\end{equation}
\end{example}

\section{$H_4$ as a rotation rather than reflection group I:  from $E_8$}\label{sec_spin}

The largest exceptional Coxeter group $E_8$ and the largest non-crystallographic Coxeter group $H_4$ are closely related. 
This connection between $E_8$ and $H_4$ can be shown via Coxeter-Dynkin diagram foldings on the level of Coxeter groups \cite{Shcherbak:1988} or as a projection relating the root systems \cite{MoodyPatera:1993b, DechantTwarockBoehm2011E8A4}. 
On the level of the root system this is due to the existence of a projection which maps the $240$ roots of $E_8$ onto the $120$ roots of $H_4$ and their $\tau$-multiples. 
We now consider the Dynkin diagram folding picture in more detail.

\begin{figure}
	\begin{center}
	
\begin{tikzpicture}[scale=0.5,
    knoten/.style={        circle,      inner sep=.1cm,        draw}
   ]
  
  \node at (1,1.6) (knoten1) [knoten,  color=white!0!black] {};
  \node at (3,1.6) (knoten2) [knoten,  color=white!0!black] {};
  \node at (5,1.6) (knoten3) [knoten,  color=white!0!black] {};
  \node at (7,1.6) (knoten4) [knoten,  color=white!0!black] {};

  \node at (1,.6) (knoten5) [knoten,  color=white!0!black] {};
  \node at (3,.6) (knoten6) [knoten,  color=white!0!black] {};
  \node at (5,.6) (knoten7) [knoten,  color=white!0!black] {};
  \node at (7,.6) (knoten8) [knoten,  color=white!0!black] {};

\node at (1,2.2)  (alpha1) {$\alpha_1$};
\node at (3,2.2)  (alpha2) {$\alpha_2$};
\node at (5,2.2)  (alpha3) {$\alpha_3$};
\node at (7,2.2)  (alpha4) {$\alpha_4$};
\node at (1,0)  (alpha7) {$\alpha_7$};
\node at (3,0)  (alpha6) {$\alpha_6$};
\node at (5,0)  (alpha5) {$\alpha_5$};
\node at (7,0)  (alpha8) {$\alpha_8$};
\node at (9,1.1)  (ra) {$\Rightarrow$};

  \path  (knoten1) edge (knoten2);
  \path  (knoten2) edge (knoten3);
  \path  (knoten3) edge (knoten4);
  \path  (knoten5) edge (knoten6);
  \path  (knoten6) edge (knoten7);
  \path  (knoten7) edge (knoten8);
  \path  (knoten4) edge (knoten7);

		  \node at (11,1.0) (knoten1) [knoten,  color=white!0!black] {};
		  \node at (13,1.0) (knoten2) [knoten,  color=white!0!black] {};
		  \node at (15,1.0) (knoten3) [knoten,  color=white!0!black] {};
		  \node at (17,1.0) (knoten4) [knoten,  color=white!0!black] {};

		\node at (11,0.4)  (a1) {$a_1$};
		\node at (13,0.4)  (a2) {$a_2$};
		\node at (15,0.4)  (a3) {$a_3$};
		\node at (16,1.45)  (tau) {$5$};
		\node at (17,0.4)  (a4) {$a_4$};

		  \path  (knoten1) edge (knoten2);
		  \path  (knoten2) edge (knoten3);
		  \path  (knoten3) edge (knoten4);

		\end{tikzpicture}
  \caption[$E_8$]{Coxeter-Dynkin diagram folding from $E_8$ to $H_4$. 
Note that deleting nodes $\alpha_1$ and $\alpha_7$ yields corresponding results for $D_6\rightarrow H_3$, and likewise for  $A_4\rightarrow H_2$ by further removing $\alpha_2$ and $\alpha_6$.}
\label{figE8}
\end{center}
\end{figure}
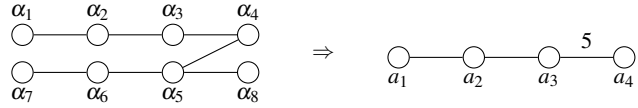

We take the simple roots $\alpha_1$ to  $\alpha_8$ of $E_8$ as shown in  Fig. \ref{figE8}, and consider the Clifford algebra in 8D with the usual Euclidean metric. The simple reflections corresponding to the simple roots are thus just given via sandwiching 
 $s_\alpha v=-\alpha v \alpha$ as in Eqn. (\ref{in2refl}). The Coxeter element $w$ is defined as the product of all these eight simple reflections, and in Clifford algebra it is therefore simply given by the corresponding (s)pinor $W=\alpha_1\dots \alpha_8$ acting via sandwiching. Its order, the Coxeter number $h$ (that is, the smallest $h$ such that $W^h=\pm 1$), is $30$ for $E_8$. 

As illustrated in Figure \ref{figE8}, one can define certain combinations of pairs of reflections (corresponding to roots on top of each other in the Dynkin diagram folding), e.g. $s_{a_1}=s_{\alpha_1} s_{\alpha_7}$ etc, and in a Clifford algebra sandwiching way these are given by the products of root vectors ${a_1}={\alpha_1} {\alpha_7}$,   ${a_2}={\alpha_2} {\alpha_6}$, ${a_3}={\alpha_3} {\alpha_5}$ and   ${a_4}={\alpha_4} {\alpha_8}$ (which is essentially a partial folding of the usual alternating folding used in the construction of the Coxeter plane with symmetry group $I_2(h)$). It is easy to show that the subgroup with the generators $s_{a_i}$  in fact satisfies the relations of an  $H_4$ Coxeter group \cite{Bourbaki1981Lie, Shcherbak:1988}: because of the Coxeter relations for $E_8$ and the orthogonality of the combined pair the combinations $s_a$ are easily seen to be involutions, and the 3-fold relations are similarly obvious from the  Coxeter relations; only for the 5-fold relation does one have to perform an explicit calculation in terms of the reflections with respect to the root vectors. This is thus particularly easy by multiplying together vectors in the Clifford algebra, rather than by concatenating two reflection formulas of the usual type -- despite   containing just two terms, concatenation gets convoluted quickly, which is not the case in the multiplication of multivectors. 

Since the combinations $s_a$ are pairs of reflections, they are obviously rotations in the eight-dimensional space, so this $H_4$ group acts as rotations in the full space, but as a reflection group in a 4D subspace. The $H_4$ Coxeter element is given by multiplying together the four combinations $a_i$ -- its Coxeter versor is therefore trivially seen to be the same as that of $E_8$ (up to sign, since orthogonal vectors anticommute) and the Coxeter number of $H_4$ is thus the same as that of $E_8$, $30$. The projection of the $E_8$ root system onto the Coxeter plane consists of two copies of the projection of $H_4$ into the Coxeter plane, with a relative factor of $\tau$.

\section{$H_4$ as a rotation rather than reflection group II: from $H_3$}\label{sec_bin}

In the previous section we have considered certain generators and multivectors in the algebra. We now consider the whole (s)pinor group generated by the simple reflections of $H_3$. The simple roots are taken as $\alpha_1=e_2$, $\alpha_2=-\frac{1}{2}((\tau -1)e_1+e_2+\tau e_3)$, and $\alpha_3=e_3$. Under free multiplication, these generate a group with $240$ elements (pinors), and the even subgroup consists of $120$ elements (spinors), for instance of the form $\alpha_1\alpha_2=-\frac{1}{2}(1-(\tau -1)e_1e_2+\tau e_2e_3)$ and $\alpha_2\alpha_3=-\frac{1}{2}(\tau-(\tau -1)e_3e_1+e_2e_3)$. These are the double covers of $I=A_5$ and $H_3=A_5\times \mathbb{Z}_2$, respectively. With these groups of multivectors one can perform standard group theory calculations, such as finding inverses and   conjugacy classes. The spinors have four components ($1$, $e_1e_2$, $e_2e_3$, $e_3e_1$); by taking the components of these $120$ spinors as a set of vectors in 4D one obtains the $120$ roots in the  $H_4$ root system. This is very surprising from a Coxeter perspective, as one usually thinks of $H_3$ as a subgroup of $H_4$, and therefore of $H_4$ as more `fundamental'; however,  one now sees that $H_4$ does not in fact contain any structure that is not already contained in $H_3$, and can therefore think of $H_3$ as more fundamental. We will present a general and uniform construction explaining and systematising this fact in the next section. 

From a Clifford perspective it is not surprising to find this group of $120$ spinors, which is the binary icosahedral group, since as we have seen Clifford algebra provides a simple construction of the Spin groups. This spinor group, the binary icosahedral group $2I$, has $120$ elements and $9$ conjugacy classes, and calculations in the Clifford algebra are very straightforward; standard approaches would have to somehow move from $SO(3)$ rotation matrices to $SU(2)$ matrices for the binary group -- here both are treated in the same framework. 
The fact that the rotational icosahedral group $I$ (given by double-sided action of spinors $R$ as $\tilde{R}xR$) has five conjugacy classes  and it being of order $60$ imply that this group has five irreducible representations of dimensions  $1$, $3$, $\bar{3}$, $4$ and $5$ (since the sum of the dimensions squared gives the order of the group $\sum d_i^2=|G|$). The nine conjugacy classes of the binary icosahedral group $2I$ of order $120$ (given by the spinors $R$ under algebra multiplication) imply that this acquires a further four irreducible spinorial representations $2_s$, $2_s'$, ${4_s}$ and ${6_s}$.

\begin{figure}
	\begin{center}
\begin{tikzpicture}[scale=0.5,
knoten/.style={        circle,      inner sep=.1cm,        draw}
]
\node at (-1,.7) (knoten0) [knoten,  color=white!0!black] {};
\node at  (1,.7) (knoten1) [knoten,  color=white!0!black] {};
\node at  (3,.7) (knoten2) [knoten,  color=white!0!black] {};
\node at  (5,.7) (knoten3) [knoten,  color=white!0!black] {};
\node at  (7,.7) (knoten4) [knoten,  color=white!0!black] {};
\node at  (9,.7) (knoten6) [knoten,  color=white!0!black] {};
\node at  (11,.7) (knoten7) [knoten,  color=white!0!black] {};
\node at  (13,.7) (knoten8) [knoten,  color=white!0!black] {};
\node at  (9,2.7) (knoten9) [knoten,  color=white!0!black] {};

\node at  (-1,0) (alpha0)  {$1$};
\node at  (1,0)  (alpha1) {$2_s$};
\node at  (3,0)  (alpha2) {$3$};
\node at  (5,0)  (alpha3) {$4_s$};
\node at  (7,0)  (alpha4) {$5$};
\node at  (9,0)  (alpha5) {$6_s$};
\node at  (11,0)  (alpha6) {$4$};

\node at (9.7,2.8)  (alpha7) {$\bar{3}$};
\node at (13,0) (alpha8) {$2_s'$};

\path  (knoten0) edge (knoten1);
\path  (knoten1) edge (knoten2);
\path  (knoten2) edge (knoten3);
\path  (knoten3) edge (knoten4);
\path  (knoten4) edge (knoten6);
\path  (knoten6) edge (knoten9);
\path  (knoten6) edge (knoten7);
\path  (knoten7) edge (knoten8);

\end{tikzpicture} 
\end{center}
\caption[$E_6^+$]{The graph depicting the tensor product structure of the binary icosahedral group $2I$ is the same as the Dynkin diagram for the  affine extension of $E_8$, $E_8^+$. }
\label{figE6aff}
\end{figure}
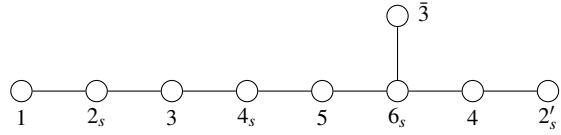

The binary icosahedral group has a curious connection with the affine Lie algebra $E_8^+$ (which also applies to the other binary polyhedral groups and the affine Lie algebras of $ADE$-type) via the so-called McKay correspondence \cite{Mckay1980graphs}, which is twofold. 
 First, we may define a graph by assigning a node to each irreducible representation of the binary icosahedral group with the following rule for connecting nodes: each node corresponding to a certain irreducible representation is connected to the nodes corresponding to those irreducible representations that are contained in its tensor product with the irrep $2_s$. For instance, tensoring the trivial representation $1$ with $2_s$ trivially gives $2_s$ and thus the only link  $1$ has is with $2_s$; $2_s\otimes 2_s=1+3$, such that $2_s$ is connected to $1$ and $3$, etc. The graph that is built up in this way is precisely the Dynkin diagram of affine $E_8$, as shown in Figure \ref{figE6aff}. The second connection is the following observation: the Coxeter element is the product of all the simple reflections $\alpha_1\dots \alpha_8$ and its order, the Coxeter number $h$, is $30$ for $E_8$. This also happens to be the sum of the dimensions of the irreducible representations of $2I$. This extends to all other cases of polyhedral groups and $ADE$-type affine Lie algebras, as shown in the second and third columns in Table \ref{tab:4} and in Figure \ref{figMcKay}.

\section{The general construction: spinor induction and the 4D Platonic Solids, Trinities and McKay correspondence}\label{sec_pin}

In this section we systematise the above observation. Starting with any 3D root system, we present a construction that yields a 4D root system; the intermediate steps involve Clifford spinor techniques. We begin by an auxiliary result.
\begin{proposition}[$O(4)$-structure of spinors]\label{HGA_O4}
The space of $\Cl(3)$-spinors $R=a_0+a_1Ie_1+a_2Ie_2+a_3Ie_3$ can be endowed with a \emph{4D Euclidean norm} $|R|^2=R\tilde{R}=a_0^2+a_1^2+a_2^2+a_3^2$ induced by the  \emph{inner product} $(R_1,R_2)=\frac{1}{2}(R_1\tilde{R}_2+R_2\tilde{R}_1)$ between  two spinors $R_1$ and $R_2$. 
\end{proposition}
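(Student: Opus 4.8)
The plan is to reduce everything to two elementary facts about the reversal anti-automorphism restricted to the even subalgebra $\Cl^{+}(3)$: that $\tilde 1 = 1$ while $\widetilde{Ie_i} = -Ie_i$ for each $i$. The latter holds because reversal negates grade-$2$ elements ($\widetilde{e_je_k} = e_ke_j = -e_je_k$ for $j\neq k$), and $Ie_1 = e_2e_3$, $Ie_2 = e_3e_1$, $Ie_3 = e_1e_2$ are exactly the three basis bivectors of $\Cl(3)$. Once this is in place, both the norm formula and the properties of the bilinear form follow by direct computation, so the "work" is purely bookkeeping with the geometric product.

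First I would establish the norm formula. Writing $R = a_0 + B$ with $B = a_1Ie_1 + a_2Ie_2 + a_3Ie_3$ the bivector part, the remark above gives $\tilde R = a_0 - B$, and since $a_0$ is a scalar the cross terms cancel: $R\tilde R = \tilde R R = a_0^2 - B^2$. It remains to evaluate $B^2$. Using that $I = e_1e_2e_3$ is central in $\Cl(3)$ with $I^2 = -1$, together with $e_i^2 = 1$, one gets $(Ie_i)^2 = I^2e_i^2 = -1$, while for $i\neq j$ anticommutativity gives $(Ie_i)(Ie_j) + (Ie_j)(Ie_i) = I^2(e_ie_j + e_je_i) = 0$; hence $B^2 = -(a_1^2+a_2^2+a_3^2)$ and $R\tilde R = a_0^2+a_1^2+a_2^2+a_3^2$, a manifestly non-negative scalar that vanishes only for $R=0$.

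Next I would treat the bilinear form, where the one point needing a moment's care is \emph{closure}: that the a priori multivector-valued expression $R_1\tilde R_2 + R_2\tilde R_1$ is actually scalar-valued. This follows because $R_1\tilde R_2 \in \Cl^{+}(3)$ has only grade-$0$ and grade-$2$ parts, reversal fixes the former and negates the latter, and the anti-automorphism property gives $\widetilde{R_1\tilde R_2} = \tilde{\tilde R}_2\tilde R_1 = R_2\tilde R_1$; therefore $R_1\tilde R_2 + R_2\tilde R_1 = R_1\tilde R_2 + \widetilde{R_1\tilde R_2} = 2\langle R_1\tilde R_2\rangle_0$, so $(R_1,R_2) = \langle R_1\tilde R_2\rangle_0 \in \mathbb R$. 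Symmetry is built into the symmetrisation, and bilinearity is immediate from linearity of reversal and bilinearity of the geometric product. Setting $R_1 = R_2 = R$ recovers $(R,R) = R\tilde R = |R|^2$ from the first step, so the form is positive definite and does induce the stated norm; and since a real symmetric bilinear form is determined by its quadratic form, polarisation identifies $(R_1,R_2)$ with the ordinary Euclidean product $a_0b_0 + a_1b_1 + a_2b_2 + a_3b_3$ in coordinates.

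There is no genuine obstacle. If one wants an even slicker route, one can observe that $1\mapsto 1$, $Ie_i \mapsto -\mathbf i, -\mathbf j, -\mathbf k$ is an algebra isomorphism $\Cl^{+}(3)\xrightarrow{\sim}\mathbb H$ carrying reversal to quaternionic conjugation, whereupon $R\tilde R$ is literally the quaternion norm and $(R_1,R_2)$ its polarisation, and the proposition is just the classical positive-definiteness of the norm on $\mathbb H$. Either way, the content is that reversal on $\Cl^{+}(3)$ behaves like conjugation for a Euclidean $4$-dimensional structure on the spinors, which is precisely what underpins the spinor-induction construction of the next section.
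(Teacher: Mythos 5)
Your proof is correct. The paper actually states this proposition without proof, treating it as a standard auxiliary fact, so there is nothing to compare against; your direct verification---reversal negating the bivector part, $R\tilde R = a_0^2 - B^2$ with $B^2 = -(a_1^2+a_2^2+a_3^2)$, and the grade argument showing $(R_1,R_2)$ is scalar-valued---is exactly the computation the paper leaves implicit, and your closing remark identifying $\Cl^{+}(3)$ with $\mathbb{H}$ (reversal as quaternionic conjugation) is precisely the structural reason the author regards the result as immediate.
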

This allows one to  reinterpret the group of 3D spinors generated from a 3D root system as a set of 4D vectors, which in fact can be shown to  satisfy the axioms of a root system as given above. 
\begin{theorem}[Induction Theorem]\label{HGA_4Drootsys}
Any 3D root system gives rise to a spinor group $G$ which induces a root system in 4D.
\end{theorem}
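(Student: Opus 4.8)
The plan is to verify the two root system axioms directly for the set $\Phi' \subset \mathbb{R}^4$ obtained by regarding the spinor group $G$ — generated under the geometric product by the simple root vectors $\alpha_1,\alpha_2,\alpha_3$ of the given 3D root system — as a collection of $4$-vectors via the identification $R = a_0 + a_1 Ie_1 + a_2 Ie_2 + a_3 Ie_3 \leftrightarrow (a_0,a_1,a_2,a_3)$ of Proposition \ref{HGA_O4}. The ambient inner product is the one supplied there, $(R_1,R_2) = \tfrac12(R_1\tilde R_2 + R_2\tilde R_1)$, with norm $|R|^2 = R\tilde R$.

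First I would establish closure and normalisation. Since each $\alpha_i$ is a unit vector, every element of $G$ is a product of unit vectors, hence a unit spinor: $R\tilde R = 1$ for all $R \in G$. So $\Phi'$ lies on the unit $3$-sphere, $G$ is a finite subgroup of $\Spin(3)$ (finiteness coming from the fact that the corresponding rotations generate the finite rotation subgroup of the 3D Coxeter group, together with the two-element kernel $\{\pm1\}$), and in particular $\Phi'$ is a finite set of nonzero vectors. Next, Axiom 1: I must check that $\Phi' \cap \mathbb{R}R = \{R, -R\}$ for each $R \in \Phi'$. Because all elements have unit norm, the only scalar multiples of $R$ that could lie in $\Phi'$ are $\pm R$; and $-R = (-1)R \in G$ since $-1 = \alpha_i^2 \cdots$ — more simply $-1 = (\alpha_1\alpha_2)^{m}$ for a suitable power, or just note $-1 \in G$ as $G$ double-covers the rotation group — so both $\pm R$ are present and nothing else is. This axiom is essentially immediate.

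The substantive step is Axiom 2: $\Phi'$ must be invariant under every reflection $s_R$, $R \in \Phi'$, where $s_R$ is the reflection of $\mathbb{R}^4$ in the hyperplane orthogonal to $R$ with respect to the induced inner product. The key computation is to identify this 4D reflection intrinsically in the Clifford algebra. I expect that for spinors $R_1, R_2$ one has $s_{R_1}(R_2) = -R_1 \tilde R_2 R_1$ (suitably interpreted as an operation on the 4D spinor space), i.e. the 4D reflection in $R_1^\perp$ is realised by the same "sandwiching" formula (\ref{in2refl}) that governed reflections of vectors, now applied to even multivectors — one verifies $-R_1\tilde R_2 R_1 = R_2 - 2(R_1,R_2)R_1$ by expanding with $R_1\tilde R_1 = 1$, exactly as in Eqn. (\ref{in2refl}) but with $\tilde R_1$ in place of the middle $v$ on one side. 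Granting this, $s_{R_1}(R_2) = -R_1\tilde R_2 R_1$ is manifestly a product of elements of $G$ (since $R_1, \tilde R_2 = R_2^{-1}, R_1 \in G$ and $-1 \in G$), hence lies in $G$, hence in $\Phi'$; so $s_R \Phi' \subseteq \Phi'$, and equality follows by finiteness. That both copies $R$ and $-R$ give the same reflection is the same double-cover remark as for vectors.

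The main obstacle — and the one place a genuine calculation is needed — is pinning down the precise form of the 4D reflection and checking it is indeed an isometry of the induced inner product that reproduces the abstract reflection $R_2 \mapsto R_2 - 2\frac{(R_1,R_2)}{(R_1,R_1)}R_1$; this requires being careful that reversal $\tilde{\phantom{x}}$ on spinors corresponds to the right linear map on $\mathbb{R}^4$ (it fixes the scalar part and negates the bivector part, an improper orthogonal map) and that the composite $R_2 \mapsto -R_1\tilde R_2 R_1$ is orthogonal with the correct $(-1)$-eigenspace. Once the reflection formula is secured, everything else is bookkeeping: closure of $\Phi'$ under the generating reflections, together with $|\Phi'| < \infty$ and $\pm1 \in G$, gives a bona fide 4D root system, whose reflection group is the spinor-induced image of the 3D rotation group extended by the reversal-reflection. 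I would close by remarking that applying this to $H_3$ recovers the $120$ roots of $H_4$ as in Section \ref{sec_bin}, which fixes the normalisation conventions and confirms the construction is non-vacuous.
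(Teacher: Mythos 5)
Your proposal is correct and follows essentially the same route as the paper: axiom 1 via unit normalisation plus the double-cover property $-R\in G$, and axiom 2 via the identity $R_2 - 2\frac{(R_1,R_2)}{(R_1,R_1)}R_1 = -R_1\tilde R_2 R_1$ together with closure of $G$ under multiplication, reversal and sign change. The extra remarks on finiteness and on verifying that reversal is an orthogonal map are sensible elaborations but do not change the argument.
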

\begin{proof}
It is sufficient to check the two axioms for the root system $\Phi$ consisting of the set of 4D vectors given by the 3D spinor group:
\begin{enumerate}
\item By construction, $\Phi$ contains the negative of a root $R$ since spinors provide a double cover of rotations, i.e. if $R$ is in a spinor group $G$, then so is $-R$ , but no other scalar multiples (normalisation to unity). 
\item $\Phi$ is invariant under all reflections with respect to the inner product $(R_1,R_2)$ in Proposition \ref{HGA_O4} since $R_2'=R_2-2(R_1, R_2)/(R_1, {R}_1) R_1=-R_1\tilde{R}_2R_1\in G$ for $R_1, R_2 \in G$ by the closure property of the group $G$ (in particular $-R$ and $\tilde{R}$ are in $G$ if $R$ is). 
\end{enumerate}
\end{proof}

Since the number of irreducible 3D root systems is limited to $(A_3, B_3, H_3)$, this yields a definite list of induced root systems in 4D; this turns out to be  $(D_4, F_4, H_4)$, which are exactly the exceptional root systems in 4D. 
In fact, the two triples may be regarded as trinities in Arnold's
sense, originally              applying to the trinity $(\mathbb{R},\mathbb{C},\mathbb{H})$ and later extended to  projective spaces, Lie algebras, spheres, Hopf fibrations etc \cite{Arnold1999symplectization,Arnold2000AMS}. Arnold's original link between our trinities $(A_3, B_3, H_3)$ and $(D_4, F_4, H_4)$ was extremely convoluted, and our construction presents a novel  direct link between the two. 

These root systems are intimately linked to the Platonic solids -- there are 5 in three dimensions and 6 in four dimensions: $A_3$ is the root system relevant to the tetrahedron, $B_3$  generates the symmetries of the cube and octahedron (which are dual under the exchange of faces and vertices), and $H_3$ describes the symmetries of the dual pair icosahedron and dodecahedron (the rotational subgroup is denoted by $I= A_5$). 

Likewise, the 4D Coxeter groups describe the symmetries of the 4D Platonic solids, but this time the connection is more immediate since the root systems are actually Platonic solids themselves: $D_4$ is the $24$-cell (self-dual), an analogue of the tetrahedron, which is also related to the $F_4$ root system, and the $H_4$ root system is the Platonic solid the $600$-cell. Its dual, the $120$-cell obviously has the same symmetry. The root system $A_1^3$ generates the root system $A_1^4$, which constitutes the vertices of the Platonic solid $16$-cell, with the $8$-cell as its dual.  There is thus an abundance of root systems in 4D that are related to the Platonic solids, and in fact the only one not equal or dual to a root system is the $5$-cell with symmetry group $A_4$ -- which of course could not be a root system, as it has an odd number ($5$) of vertices. This abundance of root systems in 4D can in some sense be thought of as due to the accidentalness of this construction.  In particular, the induced root systems are precisely the exceptional (i.e. they do not have counterparts in other dimensions) root systems in 4D: $D_4$ has the  triality symmetry (permutation symmetry of the three legs in the diagram) that is exceptional in 4D, $F_4$ is the only $F$-type root system, and $H_4$ is the largest non-crystallographic root system. In contrast, in arbitrary dimensions there are only   $A_n$ ($n$-simplex), and $B_n$  ($n$-hypercube and $n$-hyperoctahedron).

\begin{table}
\caption{Clifford spinor construction and McKay correspondence: connections between 3D, 4D and $ADE$-type root systems. $|\Phi|$, $\sum d_i$ and $h$ are $6$, $12$, $18$ and $30$, respectively.}
\label{tab:4}       
%
%
\begin{tabular}{p{0.4cm}p{3cm}p{0.4cm}p{3cm}p{5.0cm}}
\hline\noalign{\smallskip}
&3D root system &&4D root system/binary polyhedral group & affine Lie algebra  \\
\noalign{\smallskip}\svhline\noalign{\smallskip}
$A_1^3$ &\includegraphics[width=2cm]{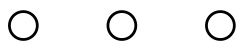}&$A_1^4$ &\includegraphics[width=2cm]{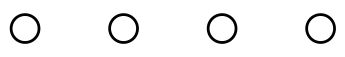}& $D_4^+$ \includegraphics[width=2cm]{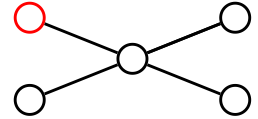}\\
\hline\noalign{\smallskip}
$A_3$ &\includegraphics[width=2cm]{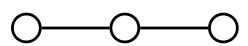}&$D_4$ &\includegraphics[width=2cm]{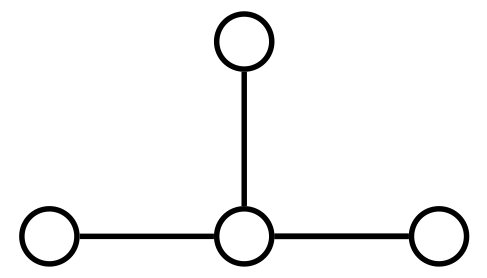}& $E_6^+$ \includegraphics[width=2cm]{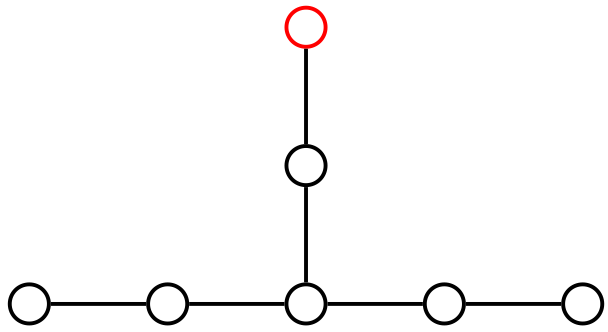}\\
\hline\noalign{\smallskip}
$B_3$ &\includegraphics[width=2cm]{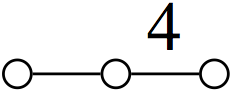}&$F_4$ &\includegraphics[width=2cm]{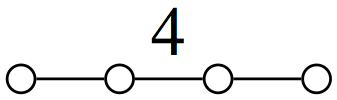} & $E_7^+$ \includegraphics[width=3cm]{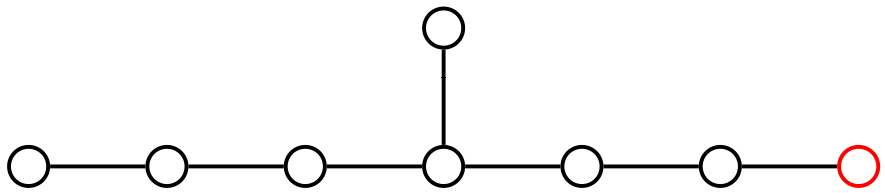}\\
\hline\noalign{\smallskip}
$H_3$ &\includegraphics[width=2cm]{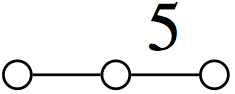}&$H_4$ &\includegraphics[width=2cm]{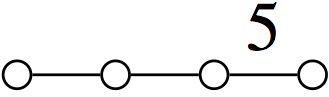} & $E_8^+$ \includegraphics[width=4cm]{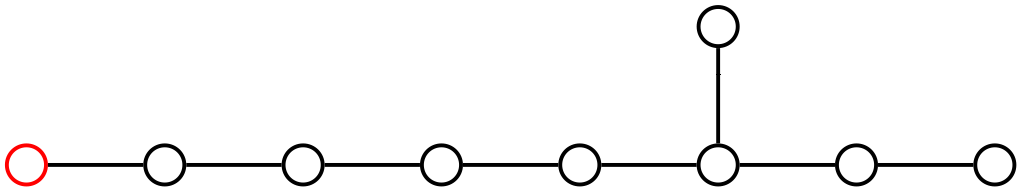}\\
\hline\noalign{\smallskip}
\end{tabular}
\end{table}

Not only is there an abundance of root systems related to the Platonic solids as well as their exceptional nature, but they also have unusual automorphism groups, in that the order of the groups grows as the square of the number of roots. This is also explained via the above spinor construction by means of the following result (which is simply guaranteed by closure of the spinor group under group multiplication, reversal and multiplication by $-1$): 
\begin{theorem}[Spinorial symmetries]\label{HGAsymmetry}
A root system induced via the Clifford spinor construction of a binary polyhedral spinor group $G$ has an automorphism group that trivially contains two factors of the respective spinor group $G$ acting from the left and from the right.
\end{theorem}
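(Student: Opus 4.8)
The plan is to exhibit the two factors of $G$ explicitly as automorphisms of the induced root system $\Phi$, and then to observe that they commute, so that together they generate a subgroup of $\Aut(\Phi)$ isomorphic to a quotient of $G\times G$. Recall from the Induction Theorem that $\Phi$ is, as a set, the spinor group $G\subset\Cl(3)$, viewed as a collection of $4$D vectors via the identification $R=a_0+a_1Ie_1+a_2Ie_2+a_3Ie_3\leftrightarrow(a_0,a_1,a_2,a_3)$, with the Euclidean inner product of Proposition \ref{HGA_O4}. The first step is to fix an arbitrary $R_0\in G$ and consider the left-translation map $L_{R_0}\colon R\mapsto R_0R$ and the right-translation map $\Lambda_{R_0}\colon R\mapsto RR_0$. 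Since $G$ is a group, each of these permutes the underlying set $\Phi=G$; it remains to check that they are orthogonal with respect to the $4$D inner product and hence lie in $O(4)$, so that they are genuine root-system automorphisms (they automatically preserve $\Phi$ set-wise).

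The second step is precisely this orthogonality check, which is immediate from Proposition \ref{HGA_O4}: for the left action, $|R_0R|^2=(R_0R)\widetilde{(R_0R)}=R_0R\tilde{R}\tilde{R}_0=R_0\,|R|^2\,\tilde{R}_0=|R|^2\,R_0\tilde{R}_0=|R|^2$ since $|R|^2$ is a scalar and $R_0$ is a unit spinor, $R_0\tilde{R}_0=1$; the right action is handled identically using $\widetilde{RR_0}=\tilde{R}_0\tilde{R}$. Thus $R\mapsto R_0R$ and $R\mapsto RR_0$ are isometries of the $4$D Euclidean space preserving $\Phi$, hence elements of $\Aut(\Phi)$. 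This gives two homomorphisms $G\to\Aut(\Phi)$, $R_0\mapsto L_{R_0}$ and $R_0\mapsto \Lambda_{R_0}$.

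The third step is to combine them: because geometric-algebra multiplication is associative, $L_{R_0}$ and $\Lambda_{S_0}$ commute for all $R_0,S_0\in G$, since $(R_0R)S_0=R_0(RS_0)$. Hence the map $G\times G\to\Aut(\Phi)$ sending $(R_0,S_0)$ to the transformation $R\mapsto R_0R\tilde{S}_0$ (the reversal on the second factor inserted so that it is a group homomorphism rather than an anti-homomorphism, using that $R_0,S_0$ are invertible with $\tilde{S}_0=S_0^{-1}$) is a group homomorphism, and its image contains an isomorphic copy of each factor $G$. This establishes that $\Aut(\Phi)$ contains two factors of $G$ acting from the left and from the right, as claimed; one may add the remark that the kernel of this homomorphism is the diagonal $\{(\pm1,\pm1)\}\cap\text{(center)}$, reflecting the two-to-one nature of the spinor cover, which is why $|\Aut(\Phi)|$ scales like $|G|^2\sim|\Phi|^2$.

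I do not expect a serious obstacle here: the statement is, as the authors note, "simply guaranteed by closure," and the only points requiring care are bookkeeping ones — recording that unit spinors satisfy $R_0\tilde{R}_0=1$ so that the norm computation closes, and inserting the reversal on one factor so that $G\times G$ (rather than $G\times G^{\mathrm{op}}$) maps in as a genuine homomorphism. The mildest subtlety worth a sentence is that one must confirm the maps land in $O(4)$ and not merely permute a finite set abstractly, i.e. that they are realised by linear orthogonal transformations of the ambient $4$D space; this is exactly what the norm-preservation computation above provides, since a linear map preserving a positive-definite form is orthogonal.
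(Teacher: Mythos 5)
Your proof is correct and follows the same route the paper intends: the paper justifies the theorem only by the parenthetical remark that it is ``simply guaranteed by closure of the spinor group under group multiplication, reversal and multiplication by $-1$,'' and your argument is exactly that observation made precise, with the norm-preservation computation $R_0\tilde{R}_0=1$ supplying the step (membership in $O(4)$) that the paper leaves implicit. The extra care about inserting a reversal to get a genuine homomorphism $G\times G\to\Aut(\Phi)$ and identifying the kernel $\{\pm(1,1)\}$ goes slightly beyond what the paper records and is consistent with the orders $|\Aut(\Phi)|\sim|\Phi|^2$ quoted there.
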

This systematises many case-by-case observations on the structure of the automorphism groups \cite{Koca2006F4,Koca2003A4B4F4}. For instance, the automorphism group of the $H_4$ root system is $2I\times 2I$; in the spinor picture, it is not surprising that $2I$ yields both the root system and the two factors in the automorphism group. 

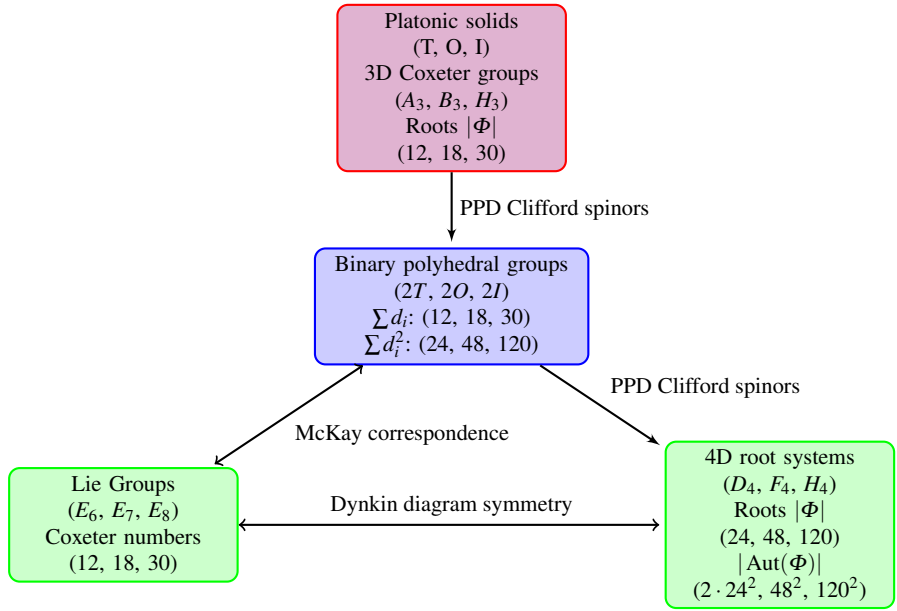
\begin{figure}

		\begin{tikzpicture}
		[scale = 0.5, auto,
		block/.style ={rectangle, draw=blue, thick, fill=blue!20,
		text width=11.5em, text centered, rounded corners,
		minimum height=4em}
		,
		blockr/.style ={rectangle, draw=red, thick, fill=blue!40!red!30!,
		text width=9.5em, text centered, rounded corners,
		minimum height=4em}
		,
		blockg/.style ={rectangle, draw=green, thick, fill=green!20,
		text width=9.5em, text centered, rounded corners,
		minimum height=4em},
		blockw/.style ={rectangle, draw=black, thick,
		text width=9.5em, text centered, rounded corners,
		minimum height=4em}
		,line/.style ={draw, thick, -latex',shorten >=2pt},
		]
		\hspace{-1cm}
		\matrix [column sep=10mm,row sep=10mm]
		{

		&  &\node [blockr] (YR2) {Platonic solids\\(T, O, I)\\3D Coxeter groups\\ ($A_3$, $B_3$, $H_3$)\\Roots $|\Phi|$\\($12$, $18$, $30$)}; \\

		&  &\node [block] (MBYR2) {Binary polyhedral groups \\ ($2T$, $2O$, $2I$)\\ $\sum d_i$:  ($12$, $18$, $30$)\\ $\sum d_i^2$: ($24$, $48$, $120$)}; & \\

		& \node [blockg] (AG2YR1) {Lie Groups\\  ($E_6$, $E_7$, $E_8$)\\ Coxeter numbers \\ ($12$, $18$, $30$)}; &&\node [blockg] (AG2YR3) {4D root systems\\($D_4$, $F_4$, $H_4$)\\ Roots $|\Phi|$\\  ($24$, $48$, $120$)\\$|\Aut(\Phi)|$\\  ($2\cdot24^2$, $48^2$,   $120^2$)};\\

		\\
		};
		\begin{scope}[every path/.style=line]


			\path (YR2)  to  node [midway]   {PPD Clifford spinors} (MBYR2);
				\path (MBYR2) [<->]  to  node [midway]   {McKay correspondence} (AG2YR1);	
			\path (MBYR2)  to  node [midway]   {PPD Clifford spinors} (AG2YR3);	
			\path (AG2YR1) [<->] to  node [midway]   {Dynkin diagram symmetry} (AG2YR3);

		\end{scope}
		\end{tikzpicture}

%
\caption[$E_6^+$]{Web of connections putting the original McKay correspondence and trinities into a wider context. The connection between the sum of the dimensions of the irreducible representations $d_i$ of the binary  polyhedral groups and the Coxeter number of the Lie algebras actually goes all the way back to the number of roots in the 3D root systems $(12, 18, 30)$ -- these then induce the binary polyhedral groups (linked to McKay) and the 4D root systems via the Clifford spinor construction. }
\label{figMcKay}
\end{figure}

We noted earlier that the binary polyhedral spinor groups and the $ADE$-type affine Lie algebras are connected via the McKay correspondence \cite{Mckay1980graphs}, for instance the binary polyhedral groups $(2T, 2O, 2I)$ and the Lie algebras $(E_6, E_7, E_8)$ -- for these $(12, 18, 30)$ is both the  Coxeter number of the respective Lie algebra and the sum of the dimensions of the irreducible representation of the polyhedral group.

However, the connection between $(A_3, B_3, H_3)$ and $(E_6, E_7, E_8)$ via Clifford spinors does not seem to be known. In particular, we note that $(12, 18, 30)$ is exactly the number of roots $\Phi$ in the 3D root systems $(A_3, B_3, H_3)$, which feeds through to the binary polyhedral groups and via the McKay correspondence all the way to the affine Lie algebras. Our construction therefore makes deep connections between trinities and puts the McKay correspondence into a wider framework, as shown in Table \ref{tab:4} and Figure \ref{figMcKay}. It is also striking that the affine Lie algebras and the 4D root systems trinities have identical Dynkin diagram symmetries: $D_4$ and $E_6^+$ have triality $S_3$, $F_4$ and $E_7^+$ have an $S_2$ symmetry and $H_4$ and $E_8^+$ only have $S_1$, but are intimately related as explained in Section \ref{sec_spin}.

\section{Group and representation theory with Clifford multivectors}\label{sec_CGA}

The usual picture of orthogonal transformations on an $n$-dimensional vector space is via $n\times n$ matrices acting on vectors, immediately establishing connections with representations. The above spinor techniques are somewhat unusual; however, it is easy to construct representations in this picture. Orthogonal transformations in the $2^n$-dimensional Clifford algebra leave the original $n$-dimensional vector space invariant; one can therefore consider various representation matrices acting on different subspaces of the Clifford algebra such as -- but not limited to -- the original vector space.

\begin{table}
\caption{Character table for the icosahedral group $I$.}
\label{tab_char_I}       
%
%
%
\begin{tabular}{p{1.8cm}p{1.8cm}p{1.8cm}p{1.8cm}p{1.8cm}p{1.0cm}}
\hline\noalign{\smallskip}
$I$&$1$&$20C_3$&$15C_2$&$12C_5$&$12C_5^2$\\
\noalign{\smallskip}\svhline\noalign{\smallskip}
$1$&$1$&$1$&$1$&$1$&$1$\\
$3$&$3$&$0$&$-1$&$\tau$&$\sigma$\\
$\bar{3}$&$3$&$0$&$-1$&$\sigma$&$\tau$\\
$4$&$4$&$1$&$0$&$-1$&$-1$\\
$5$&$5$&$-1$&$1$&$0$&$0$\\
\hline\noalign{\smallskip}
\end{tabular}
\end{table}

The scalar subspace of the Clifford algebra is one-dimensional. Double-sided action of spinors $R$ gives the trivial representation, since $\tilde{R}1R=\tilde{R}R=1$, and likewise pinors $A$ give the parity. 

The double-sided action of spinors $R$ on a vector $x$ in the $n$-dimensional vector space gives an $n\times n$-dimensional representation, which is just the usual $SO(n)$ representation in terms of rotation matrices;  similar applies to pinors and $O(n)$. For instance, for the spinor examples considered above, $\alpha_1\alpha_2$ and $\alpha_2\alpha_3$, the corresponding rotation matrices with the spinors acting as $\tilde{R}xR$ are 
$$\frac{1}{2}	\begin{pmatrix} \tau&\tau-1&-1 \\ 1-\tau&-1&-\tau\\ -1&\tau&1-\tau \end{pmatrix} \text{ and } 	\frac{1}{2} \begin{pmatrix} \tau&1-\tau&-1 \\ 1-\tau&1&-\tau\\ 1&\tau&\tau-1  \end{pmatrix}.$$
The characters $\chi(g)$ are obviously $0$ and $\tau$ in these cases, and correspond to two different conjugacy classes of the icosahedral group, as shown in Table \ref{tab_char_I}. For a general spinor $R=a_0+a_1Ie_1+a_2Ie_2+a_3Ie_3$ one has 
$$\frac{1}{2}	\begin{pmatrix} a_0^2+a_1^2-a_2^2-a_3^2&-2a_0a_3+2a_1a_2&2a_0a_2+2a_1a_3 \\2a_0a_3+2a_1a_2&a_0^2-a_1^2+a_2^2-a_3^2&-2a_0a_1+2a_2a_3\\ -2a_0a_2+2a_1a_3&2a_0a_1+2a_2a_3&a_0^2-a_1^2-a_2^2+a_3^2 \end{pmatrix} \text{ and } 3a_0^2-a_1^2-a_2^2-a_3^2, $$
so one can read off the character directly from the spinor components. 
If the spinors were acting as $Rx\tilde{R}$ (or alternatively one considers $\alpha_2\alpha_1$ and $\alpha_3\alpha_2$), then the rotation matrices would be given by 
$$\frac{1}{2} \begin{pmatrix} \tau&1-\tau&-1 \\ \tau-1&-1&\tau\\ -1&-\tau&1-\tau \end{pmatrix} \text{ and } 	
  \frac{1}{2} \begin{pmatrix} \tau&1-\tau&1 \\ 1-\tau&1&\tau\\ -1&-\tau&\tau-1  \end{pmatrix},$$
with the same characters as before.
One sees that the first example are 3-fold rotations and the second are 5-fold rotations; swapping the action of the spinor changes to the contragredient representation: if $R$ is in $12C_5$ then $\tilde{R}$ is in $12C_5^2$, and they both have the same character $\tau$ -- i.e. one exchanges the $3$ and the $\bar{3}$ by this operation. 

However, rather than restricting oneself to the  $n$-dimensional vector space, one can also define representations by  $2^n\times  2^n$-matrices acting on the whole Clifford algebra. Likewise, one can define $2^{(n-1)}\times  2^{(n-1)}$-dimensional representations as acting on the even subalgebra. For instance, for the spinors considered above which have components in  ($1$, $e_1e_2$, , $e_2e_3$, $e_3e_1$), multiplication with another spinor will reshuffle these components  ($1$, $e_1e_2$, $e_2e_3$, $e_3e_1$); this reshuffling can therefore be described by a $4\times 4$-matrix. For the examples used above, for the two specific spinors $\alpha_1\alpha_2$ and $\alpha_2\alpha_3$ multiplying a generic spinor $R=a_4+a_1Ie_1+a_2Ie_2+a_3Ie_3$ from the left reshuffles the components $(a_1, a_2, a_3, a_4)$ with the matrices given as
$$\frac{1}{2} \begin{pmatrix} -1&\tau-1&0& -\tau\\ 1-\tau&-1&-\tau&0\\ 0&\tau&-1&\tau-1 & \\ \tau&0&1-\tau &-1 \end{pmatrix} \text{ and } 	
 \frac{1}{2} \begin{pmatrix} -\tau&0&1-\tau&-1 \\ 0&-\tau&-1&\tau-1&\\ \tau-1&1&-\tau &0 \\ 1&1-\tau&0 & -\tau\end{pmatrix},$$
 with characters $-2$ and $-2\tau$. Of course there is a corresponding set of matrices where the spinor acts by right multiplication. 

These matrices are part of a representation of the icosahedral group of the so-called quaternionic type. Other polyhedral groups also have representations of quaternionic type, which seems to be regarded as deeply significant yet appears to be poorly understood. Since the 3D unit spinors  ($1$, $e_1e_2$, $e_2e_3$, $e_3e_1$) are isomorphic to the quaternion algebra, the appearance of quaternionic representations  is not very surprising from a Clifford algebra point of view. In fact, the above construction constructs the representations of quaternionic type in a uniform way, for any of the polyhedral groups (though irreducibility is a separate issue). The existence of these representations is therefore linked to the existence of the Clifford algebras and the structure of the Spin groups. These representations are therefore also much clearer in the Clifford algebra framework. 

One can easily verify the quaternionic nature of the above representation and the corresponding cases for the other polyhedral groups. 
Representations of quaternionic type $\chi$ are characterised by $||\chi||^2=\frac{1}{|G|}\sum_{g\in G}{|\chi(g)|^2}=4$; with real and complex type representations having $1$ and $2$ on the right-hand side, respectively. It is straightforward to calculate the  corresponding $120$ $4\times 4$ matrices and confirm that indeed $||\chi||^2=480/120=4$, in analogy with the two computational examples above. In fact it is easily shown that the representation matrix that belongs to a general spinor $R=b_4+b_1Ie_1+b_2Ie_2+b_3Ie_3$ is given by 
$$\begin{pmatrix} b_4&b_3&-b_2& b_1\\ -b_3&b_4&b_1&b_2\\ b_2&-b_1&b_4&b_3 \\ -b_1&-b_2&-b_3 &b_4 \end{pmatrix} \text{ and } 	
 \chi=4b_4,$$
such that the character is just given by four times the scalar component of the spinor. 

The more general ways of constructing representations outlined above hold in any dimension, and because of the characterisation of Clifford algebras as matrix algebras over $(\mathbb{R},\mathbb{C},\mathbb{H})$, one expects these to yield a mixture of representations of real, complex and quaternionic type.

\section{Conclusion}\label{sec_concl}

In this paper, we have discussed a Clifford algebra framework for certain discrete groups, based on the simple prescription for performing reflections in Clifford algebra. In fact, the Clifford algebra framework is more natural, as the existence of a quadratic form on the vector space considered in the context of root systems means that the corresponding Clifford algebra is always implicit. The reflection symmetries are the building blocks for many discrete symmetries that are interesting for applications also in mathematical physics and biology. However, the framework itself has led to new insights in pure mathematics, as regards the interplay of the geometry of dimensions three and four, Trinities and the McKay correspondence, as well as group and representation theory. It remains to explore the implications of these results in pure and applied mathematics; this has been begun in pure mathematics in \cite{Dechant2012CoxGA,  Dechant2013Platonic}, but there could also be interesting consequences in high energy physics, where 4D root systems are ubiquitious in String Theory, M-Theory and Grand Unified Theories. In particular, recently I was also able to derive the $E_8$ root system consisting of $240$ roots via an analogous Clifford construction of a double cover of the full icosahedral group $H_3$ of order $120$  \cite{Dechant2015Birth,Dechant2015AGACSE} and using a certain reduced inner product due to Wilson \cite{Wilson1986E8}. 


\begin{acknowledgement}
	I would like to thank Reidun Twarock, Anne Taormina, David Hestenes, Anthony Lasenby, John Stillwell, Jozef Siran, Robert Wilson 
	and Ben Fairbairn. 
\end{acknowledgement}


\end{document}